\numberwithin{equation}{section}
\theoremstyle{plain}
\newtheorem{theorem}{Theorem}[section]
\newtheorem{lemma}[theorem]{Lemma}
\theoremstyle{definition}
\newtheorem{definition}[theorem]{Definition}
\newtheorem{remark}[theorem]{Remark}
\newtheorem{example}[theorem]{Example}
\DeclareMathOperator{\Diagram}{Diagr}
\begin{document}

%%%%%%%%%%%%%%%%%%%%%%%%%%%%%%
%% FRONT MATTER
%%%%%%%%%%%%%%%%%%%%%%%%%%%%%%

\title[A note on the probability of a groupoid having deficient sets]{A note on the probability of a groupoid \\ having deficient sets\footnote{Under review at \emph{Algebra Universalis.}} }

%% First author: in the order \author, \address, \urladdr, \email
\author[C. Card\'o]{Carles Card\'o}
\address{Departament de Ci\`encies Bàsiques, \\Universitat Internacional de Catalunya, c/ Josep Trueta s/n,  \\Sant Cugat del Vallès,  08195, Spain. }
%\urladdr{http://www.cs.upc.edu}
\email{ccardo@uic.es}

%% Thanks (Optional)
%\thanks{xxxxxxxxxx}

%% Dedication (Optional)
%\dedicatory{This article is dedicated to AU authors, present and future}

%% AMS subject classification; see http://www.ams.org/msc
%% List classification codes in order of relevance
\subjclass{08A30, 60B99}

%% Key words and phrases
\keywords{Random algebra, Random groupoid, Deficient set}

\begin{abstract} 
A subset $X$ of a groupoid is said to be deficient if $|X \cdot X|\leq |X|$. It is well-known that the probability that a random groupoid has a deficient $t$-element set with $t\geq 3$ is zero. However, as conjectured in \cite{Freese2022}, we show that the probability is not zero in the case of sets of two elements and calculate the exact value. We explore some generalisations on deficient sets and their likelihoods.
\end{abstract}

\maketitle

%%%%%%%%%%%%%%%%%%%%%%%%%%%%%%%%%
%% MAIN MATTER
%%%%%%%%%%%%%%%%%%%%%%%%%%%%%%%%%

 %%%%%%%%%%%%%%%%%%%%%%%%%%
\section{Introduction}
%\label{Introduction}
 %%%%%%%%%%%%%%%%%%%%%%%%%%
 
A property of ``almost all'', or ``almost securely'', for finite algebras is a property that is satisfied with a probability tending to one when the size of the algebras goes infinite. There are many examples of such properties. For instance, almost all finite algebras are rigid, i.e., their automorphism group is trivial. Or, for instance, almost all finite algebras with at least two operation symbols, at least one non-unary, have no trivial subalgebras. See \cite[\S 6.2]{Bergman2012} or \cite[\S 9.7]{Freese2022}.

Although there are two types of probabilities to consider (one is based on counting labelled algebras, while the other is based on counting isomorphic classes of algebras), it turns out that, as Freese proved \cite{Freese1990}, both types coincide for groupoids and in general for algebras with a mild condition on the signature.

One of the most accurate known results of ``almost all'' is due to Murski\v{\i} in the 1970s. He proved that almost all finite algebras are \emph{idemprimal}, meaning that the algebra operations can express any idempotent term function. Despite the several characterisations of idemprimality, the proof relies on showing that a random algebra does not admit a \emph{reduced set} \cite[Definition~9.95]{Freese2022}. The proof, \cite[Lemma~9.100]{Freese2022} inspects ten cases, many involving the notion of a deficient set. A subset $X$ of a groupoid is deficient if $|X\cdot X | \leq |X|$. For example, the second case states that almost all groupoids have no deficient sets with more than two elements. In addition, in many other cases, the proof considers 2-element deficient sets with some extra condition that makes the case not admissible for random groupoids. In this respect, in \cite[p. 93]{Freese2022}, the authors comment:
\begin{quote}
Notice that most of the statements (i)-(x) assert the existence of a 2-element subset with some additional property. Unfortunately, it seems not to be the case that a random finite groupoid has a probability 0 of having a deficient 2-element set. Something approaching the complexity of the Lemma seems to be required of any proof of Murski\v{\i} Theorem.
\end{quote}
Intrigued by this remark, we have calculated the probability that a random groupoid has a deficient 2-element set. That and some generalisations on deficient sets constitute the theme of the present note.
  
Let us fix the notation. The classical combinatorial functions
${n \choose m}$, ${ n \brace m }$, $\big[ n \big]_m$, $n!\! !$, and $n!\! ! \! !\,$,
are called, respectively, the \emph{combinatorial number $n$ over $m$}, the \emph{Stirling number of second type $n$ over $m$}, the \emph{falling factorial of $n$ $m$ times},
the \emph{double factorial} and the \emph{triple factorial} of $n$. For definitions and properties, see, for example, \cite{knuth2003concrete}. Let us recall that ${n \brace m}$ counts the number of partitions of an $n$-element set into blocks of size $m$, and that $\big[n \big]_m= n(n-1) \cdots (n-m+1)$.
By a groupoid $A$ of order or size $n$, we mean a base set $A=[n]=\{1, \ldots, n\}$ and a binary operation.
Given a non-trivial groupoid $(A,\cdot)$, and $\{i,j\} \subseteq A$ with $i<j$, we will abbreviate the portion of the Cayley table on values $i,j$ by a matrix,
$$ \begin{pmatrix} x & y \\ z & t \end{pmatrix}=
\mbox{\begin{tabular}{ c | c c }
$\cdot$ & $i$ & $j$ \\
\hline
$i$ & $x$ & $y$ \\
$j$ & $z$ & $t$
\end{tabular}} $$

\begin{definition}
Given a groupoid $A$, and a 2-element subset $X \subseteq A$,
\begin{enumerate}[(i)]
\item  we say that a $X$ is of \emph{type} $T_0$ if its portion of the Cayley table is of the form
$$\begin{pmatrix} x & x \\ x & x \end{pmatrix},$$
for some $x \in A$;
\item  and we say that $X$ is of \emph{type} $T_1, \ldots, T_7$, if its portion of the Cayley table is one of the seven following forms, respectively,
$$\begin{pmatrix} x & y \\ y & y \end{pmatrix}, \begin{pmatrix} y & x \\ y & y \end{pmatrix}, \begin{pmatrix} y & y \\ x & y\end{pmatrix},\begin{pmatrix} y & y \\ y & x \end{pmatrix},  \begin{pmatrix} x & x \\ y & y \end{pmatrix},\begin{pmatrix} x & y \\ x & y\end{pmatrix},\begin{pmatrix} x & y \\ y & x \end{pmatrix}, $$
for some $x,y \in A$ with $x\not=y$.
\end{enumerate}
\end{definition}

Notice that any deficient 2-element set pertains to one, and only one, type from 0 to 7. By $X_{t,n}$, we denote the random variable that counts the number of deficient 2-subsets of the type $t=0, \ldots, 7$ in a random groupoid of size $n$. We define the probability
$\Pr(X_{t,n}=m)$ as the fraction given by the number of labelled groupoids of size $n$ containing exactly $m$ deficient sets of type $t$ divided by $n^{n^2}$. Then, we define $X_t$ as the random variable with the probability distribution
$$\Pr(X_{t}=m)=\lim_{n\to \infty} \Pr(X_{t,n}=m).$$
Finally, define $X=\sum_{t=0}^7 X_i$. We are interested in calculating $\Pr(X\geq 1)$. Let us make a pair of easy observations before tackling the problem.

\begin{enumerate}[(i)]

\item The probability that a groupoid $A$ of size $n\geq 2$ has a deficient set $\{i,j\}$ of type 0 is
$$ \frac{n^{n^2-4}\cdot n}{n^{n^2}}=\frac{1}{n^3},$$
because the type 0 is determined by only one parameter $x$, that is $i\cdot i =i \cdot j=j \cdot i= j\cdot j=x$, and then there are $n^{n^2-4}$ entries of the Cayley table which can take independently any value. Therefore,
$$\Pr(X_0 \geq 1)\leq \lim_{n\to \infty} {n\choose 2}\frac{1}{n^3}=0,$$
and $\Pr(X_0=0)=1$. Thus, $\Pr(X=m)=\Pr(X_1+\cdots +X_7=m)$ and we do not need to consider the type 0 for the calculations.

\item However, two parameters $x,y$ determine the other seven types.
The probability that a groupoid of size $n$ has a deficient set $\{i,j\}$ of type $ t \in \{1,\ldots,7\}$ is
$$ \frac{n(n-1)n^{n^2-4}}{n^{n^2}}=\frac{n-1}{n^3},$$
and then,
$$\Pr(X_t \geq 1)\leq \lim_{n\to \infty} {n\choose 2}\frac{n-1}{n^3}=\frac{1}{2}.$$
That bound does not ensure that the probability of having a deficient 2-element set is not null. To obtain the exact value, we need to develop it in more detail.

\end{enumerate}

%%%%%%%%%%%%%%%%%%%%%%%%%%
\section{Calculation}
%%%%%%%%%%%%%%%%%%%%%%%%%%

\begin{definition} A \emph{$k$-configuration of deficient 2-element sets} in a groupoid $A$ is a set of $k$ deficient 2-element sets of $A$ of types 1 to 7.

We will say that a configuration $C$ is \emph{disjoint} if $D\cap D' =\emptyset$ for each two different $D, D'\in C$. The \emph{disjoint sum} of two configurations $C, C'$ is denoted and defined as $C \oplus C' =C \cup C'$, provided that $D\cap D' =\emptyset$ for each $D \in C$ and $D'\in C'$.

The set of $k$-configurations in a groupoid of order $n$ is denoted by $\mathcal{C}_{k,n}$ and the subset of disjoint configurations is denoted by $\mathcal{D}_{k,n}$.
\end{definition}

%%%%%%%%%%%%%%%%%%%%%%
\begin{figure}[tb] %choose h,b,t (here, bottom, top)
%%%
\centering
\begin{overpic}[scale=0.13]{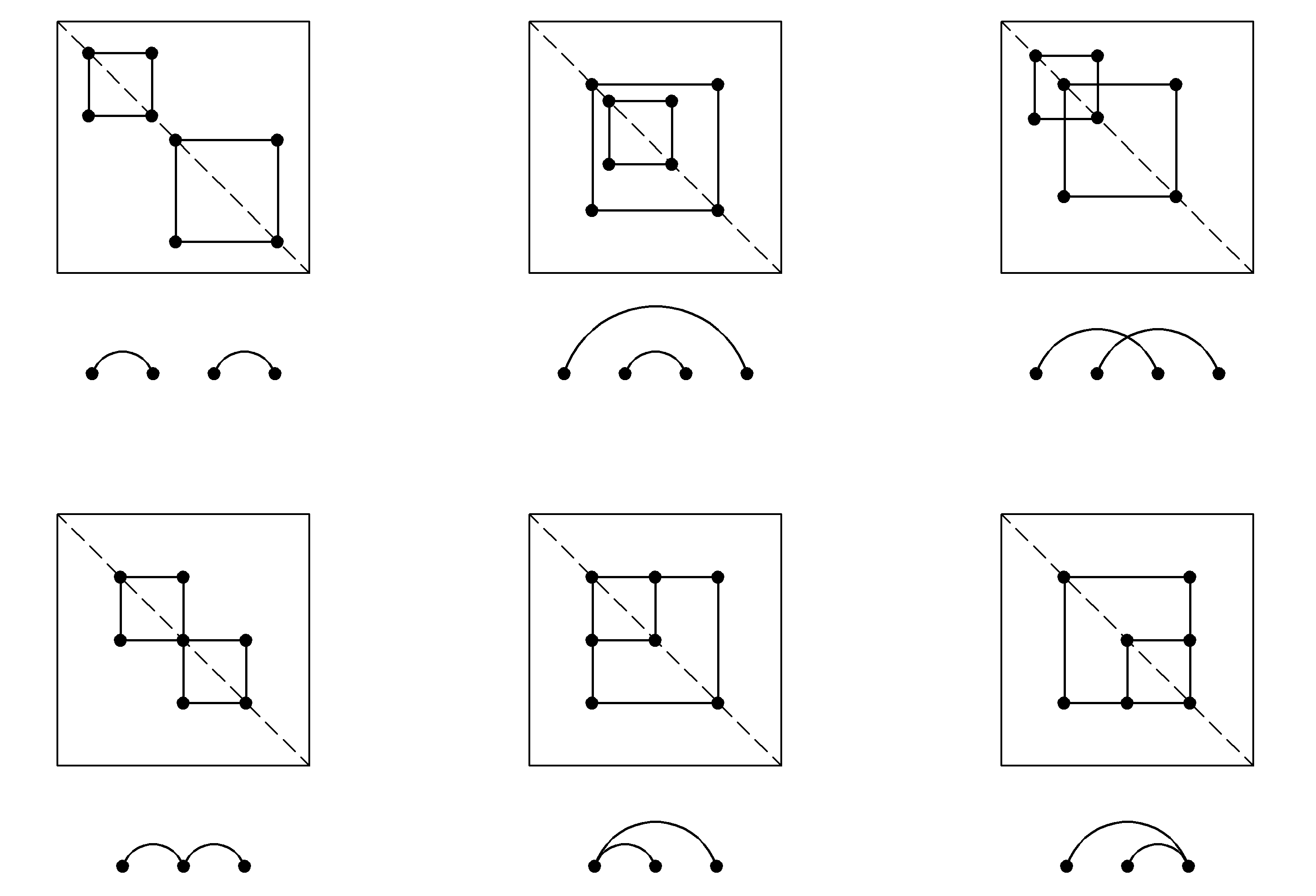}
%\put (14,50) {(a)}
\end{overpic}
\caption{Diagrams of all the 2-configurations and their schematic representation on the Cayley table. Labels on edges are not shown. The external square represents the Cayley table of the groupoid, and the squares inside represent the deficient 2-element sets. The first three are perfect matchings.}\label{Diagrams}
\end{figure}
%%%%%%%%%%%%%%%%%%%%%%

\begin{definition}
A \emph{diagram} $G=(V,E, <, \ell)$ consists of
\begin{enumerate}[(i)]
\item a set of vertices $V$ with a total order $<$,
\item a symmetric graph $(V,E)$, $E \subseteq \{ \{v,w\} \mid v,w \in V\}$ such that $\deg(v)\geq 1$ for each vertex $v\in V$,
\item a mapping labelling the edges with a type $\ell:E\longrightarrow \{T_1,\ldots, T_7\}$ .\end{enumerate}
A diagram is a \emph{perfect matching} if $\deg(v)=1$ for each vertex $v$.
We say that two diagrams $G=(V,E,<, \ell)$ and $G'=(V',E',<', \ell')$ are equivalent if there is an isomorphism of graphs $f: V  \longrightarrow V'$, preserving the orders and the labellings.
\end{definition}

An alternative name for perfect matching is that of \emph{Brauer diagram} \cite{terada2001brauer}. For our purpose, we only need to know that a Brauer diagram, or perfect matching, has an even number of vertices $2k$, with $k$ edges, and there are $(2k-1)!\! !$ perfect matchings \cite{Callan2009}.

\begin{lemma} Given a groupoid $A$ and a $k$-configuration $C$, consider the 4-upla $\Diagram(C)=(\bigcup C, C, <, \ell)$ where $<$ is the natural order of integers and $\ell(D)$ is the type corresponding to the deficient set $D$. Then, $\Diagram(G)$ is a diagram. In addition, $C$ is disjoint if and only if $\Diagram(G)$ is a perfect matching.
\end{lemma}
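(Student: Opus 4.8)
The plan is to unwind the definitions: each requirement in the notion of a diagram, and the equivalence with perfect matchings, turns out to be a direct reformulation of the data already carried by a configuration. First I would verify the three clauses of the definition of a diagram for $\Diagram(C)=(\bigcup C, C, <, \ell)$. For clause (i), the vertex set $V=\bigcup C$ is a subset of $A=[n]$, so it inherits the natural order of the integers, which is total. For clause (ii), every member $D\in C$ is a $2$-element subset $\{i,j\}$ of $A$ with $i,j\in\bigcup C=V$, so $C$ is indeed a set of (automatically symmetric) $2$-element subsets of $V$; moreover, since every vertex $v\in\bigcup C$ lies, by definition of the union, in at least one $D\in C$, we obtain $\deg(v)\geq 1$ for all $v$.

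For clause (iii), the only point requiring genuine care is that the labelling $\ell(D)=(\text{type of }D)$ is a well-defined function on edges. Here I would invoke the remark following the definition of the types, which asserts that every deficient $2$-element set belongs to exactly one type among $0,\ldots,7$; since a $k$-configuration consists by definition only of deficient sets of types $1$ to $7$, the assignment $\ell$ is single-valued and takes values in $\{T_1,\ldots,T_7\}$. This completes the check that $\Diagram(C)$ is a diagram.

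It remains to establish the equivalence, and I would argue by contraposition in both directions, observing that the two conditions become literal negations of one another once spelled out. If $C$ is not disjoint, there are distinct $D,D'\in C$ with $D\cap D'\neq\emptyset$; any $v\in D\cap D'$ then lies in the two distinct edges $D,D'$, so $\deg(v)\geq 2$ and $\Diagram(C)$ is not a perfect matching. Conversely, if $\Diagram(C)$ is not a perfect matching, some vertex $v$ has $\deg(v)\geq 2$, so $v$ belongs to two distinct edges $D,D'\in C$, whence $v\in D\cap D'$ witnesses that $C$ is not disjoint. Combined with the degree lower bound from clause (ii), disjointness is therefore exactly the statement that $\deg(v)=1$ for every vertex $v$.

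There is essentially no analytic obstacle in this lemma: it is a bookkeeping statement translating configurations into diagrams, and the entire argument is definitional. The single subtle point worth flagging is the well-definedness of $\ell$, which is not automatic from the configuration alone but relies on the uniqueness of the type of a deficient set; I would cite that remark explicitly rather than treat it as obvious. (As a sanity check, note that in the disjoint case the $k$ pairwise disjoint edges force $|\bigcup C|=2k$, consistent with the stated fact that a perfect matching has $2k$ vertices and $k$ edges.)
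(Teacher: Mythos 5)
Your proof is correct and follows essentially the same route as the paper's: both arguments reduce the lemma to unwinding definitions, using that every vertex of $\bigcup C$ lies in some $D\in C$ (so $\deg(v)\geq 1$) and that disjointness of $C$ is exactly the condition $\deg(v)=1$ for all vertices. Your version is merely more explicit than the paper's --- notably in checking well-definedness of $\ell$ via the uniqueness of the type of a deficient $2$-element set, and in spelling out the converse that the paper dismisses as trivial.
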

\begin{proof} It is obvious that $\Diagram(G)$ has no isolated vertices because each vertex belongs to some deficient set, so the degree of each vertex is never zero. For the second statement, if $C$ is disjoint, $D \cap D'=\emptyset$ for any $D, D'\in C$ with $D\not=D'$. Therefore, $D$ and $D'$, understood as edges of the graph, do not share any vertex, which means that all the vertices have degree 1. The converse is trivial.
\end{proof}

\begin{definition} We say that two configurations are equivalent, denoted by $C\sim C'$, if $\Diagram(C)$ and $\Diagram(C')$ are equivalent. We denote by $\widetilde{C}$ the equivalence class of $C$.
\end{definition}

\begin{example}
Figure~\ref{Diagrams} shows schemes of 2-configurations and their diagrams without the labellings. Considering the labellings, we have $7\cdot 7\cdot 6=294$ possible diagrams. 
\end{example}

\begin{remark} \label{Remark1} All the two-edged diagrams are realisable as a configuration diagram. However, consider the diagram $G=(V,E, <,\ell)$, where $V=\{a,b,c\}$,  $a<b<c$, $E=\{ \{ab\}, \{b,c\}, \{a,c \}\}$ and $\ell(\{a,b\})=T_1$,   $\ell(\{a,b\})=T_4$, $\ell(\{a,b\})=T_7$. $G$ is not the diagram of any configuration. Hence, there are more diagrams than equivalence classes of configurations. 
\end{remark}

\begin{lemma} We have that
$$\left|\faktor{\mathcal{C}_{k,n}}{\sim}\right| \leq 7^k \cdot 2^{2k^2-k}, \qquad \left|\faktor{\mathcal{D}_{k,n}}{\sim}\right|=\begin{cases} 7^k (2k-1)!\! ! & \mbox{ if } n\geq 2k, \\ 0 & \mbox{ otherwise.} \end{cases}$$
\end{lemma}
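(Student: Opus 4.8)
The plan is to reduce both statements to the counting of equivalence classes of diagrams, since by the definition of $\sim$ the operator $\Diagram$ induces an injection from $\mathcal{C}_{k,n}/{\sim}$ (respectively $\mathcal{D}_{k,n}/{\sim}$) into the set of equivalence classes of diagrams with exactly $k$ edges (respectively, of perfect matchings with $k$ edges, using the earlier lemma that $C$ is disjoint iff $\Diagram(C)$ is a perfect matching). A diagram with $k$ edges and no isolated vertices has at most $2k$ vertices, and since the vertex set carries only a total order, up to equivalence I may always take it to be an initial segment $\{1,\dots,v\}$ of $\{1,\dots,2k\}$, the unique order isomorphism supplying the relabelling. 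Thus an equivalence class is completely recorded by its edge set, viewed as a subset $S\subseteq\binom{[2k]}{2}$ of size $k$, together with the labelling $\ell\colon S\to\{T_1,\dots,T_7\}$; this assignment is injective, because the non-isolated vertices of $S$ recover $v$ and hence the whole class.

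For the first (upper-bound) statement I would then simply count this data. There are $\binom{2k}{2}=2k^2-k$ possible pairs on $2k$ ordered points, so the number of admissible edge sets is at most $\binom{2k^2-k}{k}\le 2^{2k^2-k}$, and each of the $k$ chosen edges receives one of $7$ labels, contributing a factor $7^k$. Multiplying gives $|\mathcal{C}_{k,n}/{\sim}|\le 7^k\,2^{2k^2-k}$. This argument deliberately overcounts: it includes diagrams that are not realisable as configurations (such as the triangle of Remark~\ref{Remark1}), but since only an inequality is sought, this is harmless.

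For the second statement, the case $n<2k$ is immediate: a disjoint $k$-configuration requires $2k$ distinct elements of $[n]$ and hence cannot exist, so $\mathcal{D}_{k,n}=\emptyset$ and the count is $0$. When $n\ge 2k$, the diagram of a disjoint configuration is a perfect matching, so by the reduction above $\mathcal{D}_{k,n}/{\sim}$ injects into the set of labelled perfect matchings on $\{1,\dots,2k\}$; there are $(2k-1)!!$ perfect matchings and $7^k$ labellings, giving the upper bound $7^k(2k-1)!!$. The crux is that here the injection is a bijection: every labelled perfect matching \emph{is} realisable. Indeed, since $n\ge 2k$ the points $1,\dots,2k$ all lie in $A=[n]$, the $k$ pairs of the matching are mutually disjoint, so the four Cayley-table cells determined by each pair occupy positions disjoint from those of the others; hence the prescribed types can be imposed independently and the table completed arbitrarily elsewhere. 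This surjectivity yields the equality $|\mathcal{D}_{k,n}/{\sim}|=7^k(2k-1)!!$.

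The main obstacle to watch is exactly this contrast between the two parts, namely realisability. For general configurations it can fail, as the triangle diagram shows, which is why only an inequality is available, and the delicate point is to arrange the counting so that the non-realisable diagrams are absorbed into the overcount. For disjoint configurations the disjointness of the occupied Cayley-table cells guarantees realisability, and the only remaining care is the bookkeeping of the order-preserving, label-preserving equivalence, in particular verifying that distinct perfect matchings on the ordered set $\{1,\dots,2k\}$ are never equivalent, which holds because the identity is the sole order automorphism of $\{1,\dots,2k\}$.
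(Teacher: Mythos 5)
Your proposal is correct and takes essentially the same route as the paper: inject equivalence classes of configurations into equivalence classes of diagrams via $\Diagram$, bound these by $7^k\cdot 2^{\binom{2k}{2}}=7^k\cdot 2^{2k^2-k}$ (graphs on at most $2k$ ordered vertices times labellings), and identify classes of disjoint configurations with labelled perfect matchings, of which there are $7^k(2k-1)!!$. You are in fact somewhat more thorough than the paper's proof, which merely asserts the bijection in the disjoint case and leaves the $n<2k$ case implicit, whereas you verify surjectivity explicitly (realisability of every labelled perfect matching via the disjointness of the corresponding Cayley-table cells when $n\ge 2k$) and note that the identity is the only order automorphism of $\{1,\dots,2k\}$, so distinct labelled matchings are inequivalent.
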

\begin{proof} There is an injection from the set of equivalence classes of configurations into the set of equivalence classes of diagrams. Recall that in Remark~\ref{Remark1}, we commented that there are more diagrams than configurations, but that suffices to get the bound. Each edge has two vertices, so the graph has at most $2k$ vertices, and each subset of edges defines a graph. Therefore, there are at most $2^{2k \choose 2}$ possible graphs and $7^k$ possible labellings for each graph. 

 Regarding the second statement, there is a bijection between the set of equivalence classes of disjoint configurations and the set of equivalence classes of perfect matchings. There are $(2k-1)!\! !$ perfect matchings. Edges of each graph can be decorated in $7^k$ ways. 
\end{proof}

\begin{definition} Given a set $X \subseteq A$ of a groupoid $(A,\cdot)$, we denote by $X^*$ the operation of the partial groupoid 
 $$X^* =\{ (x,y,x\cdot y) \mid x,y \in X\} \subseteq \cdot: A\times A \longrightarrow A.$$  
\end{definition}

\begin{lemma} \label{Lemma3} Let $C$ be a configuration and set the quantities:
$$\alpha(C)=\left|\left(\bigcup C\right)\cdot \left(\bigcup C\right)\right|, \qquad\beta(C)=\left|\bigcup_{D \in C} D^*\right|, \qquad \gamma(C)=\left|\bigcup C\right|.$$
Given a diagram $G=(V,E,<,\ell)$, we denote by $v(G)$ the number of vertices, $k(G)$ the number of edges and $c(G)$ the number of connected components of the base graph. If $G=\Diagram(C)$ for some $k$-configuration, then $k=k(G)$ and
\begin{enumerate}[(i)]
\item $\alpha(C)\leq k(G)+c(G)$;
\item If $C$ is disjoint, $\alpha(C)=2k(G)$;
\item $\beta(C)=2k(G)+v(G)$;
\item $\gamma(C)=v(G)$;
\item $c(G)\leq k(G)$, and the equality holds iff $C$ is disjoint.
\end{enumerate} 
\end{lemma}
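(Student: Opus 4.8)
The plan is to take the five items in increasing order of difficulty, clearing the three bookkeeping identities first and then the two genuinely combinatorial claims. The relation $k=k(G)$ is immediate: by construction the edge set of $\Diagram(C)$ is $C$ itself, and since $C$ consists of $k$ pairwise distinct $2$-element sets, $|E|=|C|=k$. Item (iv) is just unwinding definitions, since the vertex set of $\Diagram(C)$ is $V=\bigcup C$, so $v(G)=|V|=\gamma(C)$. For (iii) I would split the triples of $\bigcup_{D\in C}D^*$ into \emph{diagonal} ones $(v,v,v\cdot v)$ and \emph{off-diagonal} ones $(i,j,i\cdot j)$ with $i\neq j$. An off-diagonal triple recovers from its first two coordinates the unordered pair $\{i,j\}$, i.e.\ the edge it comes from, and each edge $\{i,j\}$ yields exactly the two triples from $(i,j)$ and $(j,i)$; so there are exactly $2k(G)$ off-diagonal triples, all distinct. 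A diagonal triple is determined by its single vertex $v$ and occurs once per vertex, giving $v(G)$ of them; the two families are disjoint because one has equal and the other unequal first two coordinates. Hence $\beta(C)=2k(G)+v(G)$.

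Claim (v) is a standard partition argument: a diagram has no isolated vertices, so every connected component carries at least one edge; as the $k(G)$ edges are distributed among the $c(G)$ components, $c(G)\le k(G)$. Equality forces each component to carry exactly one edge, hence to be a single edge on two vertices, so $G$ is a perfect matching, which by the earlier lemma happens precisely when $C$ is disjoint.

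The heart of the matter is (i). The key observation is that for any vertex $v$ and any edge $D\ni v$, the diagonal value $v\cdot v$ lies in $D\cdot D$, since $v\cdot v$ is one of the four entries of the block of $D$ and a block of type $T_1,\dots,T_7$ takes exactly two distinct values. I would then enumerate the edges of a fixed connected component in an order in which each edge after the first shares a vertex with the union of the previous ones. The first edge contributes at most two values; when a later edge $D'$ is attached at an already-present vertex $v$, the value $v\cdot v$ has already been counted, so $D'$ adds at most the one remaining value of its block. A component with $p$ edges therefore contributes at most $p+1$ distinct values, and summing over the $c(G)$ components gives $\alpha(C)\le k(G)+c(G)$.

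For (ii), disjointness makes $G$ a perfect matching, so (i) together with $c(G)=k(G)$ yields $\alpha(C)\le 2k(G)$; the reverse inequality comes from the blocks being vertex-disjoint, so that no shared diagonal entry ties their values together and each block contributes its two distinct values independently, giving $\alpha(C)=2k(G)$. I expect the main obstacle to lie precisely in the two value-counting steps. In (i) one must verify that connectivity really presents each new edge with a previously seen vertex, and treat carefully the \emph{cycle-closing} case where a new edge is attached at two old vertices $v,w$ simultaneously: there both $v\cdot v$ and $w\cdot w$ are already accounted for, the edge may add zero new values, and one must check that the bound $p+1$ still stands (this is exactly why (i) is an inequality and not an equality, and why the deficit is governed by $c(G)$). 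In (ii) the delicate point is the lower bound $\alpha(C)\ge 2k(G)$, where one must justify that vertex-disjoint blocks impose no forced coincidence among their values — the genericity that distinguishes disjoint configurations and that ultimately drives the nonvanishing of the limiting probability.
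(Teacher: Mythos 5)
Your proof is correct, and on the two substantive items it takes a genuinely different route from the paper. For (i) and (iii) the paper argues by additivity over disjoint sums plus induction: it computes $\alpha$ and $\beta$ exactly for path diagrams, then claims every connected diagram arises by adding edges to a spanning path on $v(G)$ vertices, using a case analysis over the types (equal diagonal for $T_2,T_3,T_7$ versus distinct diagonal for $T_1,T_4,T_5,T_6$) to decide whether a re-inserted edge contributes a new parameter. Your edge-attachment argument --- order the edges of a component so that each edge after the first meets an already-seen vertex, note that each block carries exactly two values, and that the diagonal value $v\cdot v$ at the attaching vertex was already counted --- reaches the same bound of $p+1$ values for a $p$-edge component with no case analysis at all, and it is in fact more robust: the paper's reduction to spanning paths breaks down for trees that are not paths (a star such as $K_{1,3}$ is connected, has no spanning path, and every edge removal disconnects it, so neither the paper's base reduction nor its induction step applies to it, and stars are realisable as diagrams), whereas the ordering you invoke exists for every connected graph. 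Similarly, your proof of (iii) by directly counting triples via their first two coordinates ($2k(G)$ off-diagonal plus $v(G)$ diagonal) replaces the paper's path induction with an immediate exact count. Items (iv) and (v) and the deduction of (ii) from the perfect-matching case match the paper. One caveat, which applies equally to the paper: read literally as a cardinality in a fixed groupoid, the equality in (ii) can fail through accidental coincidences of values across disjoint blocks (and the paper's additivity $\alpha(C\oplus C')=\alpha(C)+\alpha(C')$ has the same issue); the intended meaning is a count of free parameters, the reading used later in the factor $n^{\alpha(C)}+O(n^{\alpha(C)-1})$. You correctly identified the lower bound $\alpha(C)\geq 2k(G)$ as the delicate point, and your genericity justification is at the same level of rigor as the paper's appeal to additivity, so this is not a gap relative to the source.
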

\begin{remark} We will need the above functions to count groupoids having certain configurations. The elements in $\bigcup C$ define a partial groupoid. The quantity $\alpha(C)$ is the number of necessary parameters to describe the partial groupoid. The quantity $\beta(C)$ is the number of entries of the partial groupoid  $\bigcup C$ in the Cayley table. The quantity $\gamma(C)$ is transparent: it just counts the number of elements of the partial groupoid.  
\end{remark}
\begin{proof}[Proof of Lemma~\ref{Lemma3}] The statement $k=k(G)$ is clear. 
\begin{enumerate}[(i)]
\item It is easily seen that $\alpha(C\oplus C')=\alpha(C) + \alpha(C')$. Let $G=\Diagram(C)$. We will prove that for a connected graph $G$, $\alpha(C)\leq k(G)+1$. Therefore, if $G$ has $c(G)$ connected components, $\alpha(C)\leq k(G)+c(G)$. 
Suppose that $G$ is connected. Consider first the case that $G$ is a path $G=P$. By induction on the number of vertices, it is easy to check that $\alpha(P)=v(P)$. Next, by induction on the number of edges, suppose that $G'=\Diagram(C')$ is connected but not a path. Then, there is at least an edge $D=\{i,j\}$ such that if we remove it, the resulting graph $G=\Diagram(C)$ is still connected. When we put the edge again, there are two possibilities. First, $i\cdot i= j \cdot j$. Then, $D$ is one of the types $T_2$, $T_3$ or $T_7$, we are adding a new parameter, and $\alpha(C') =\alpha(C)+1$. Second, $i \cdot i \not= j \cdot j$. Then, $D$ is one of the types $T_1, T_4, T_5$, or $T_6$, no new parameter is added and $\alpha(C')=\alpha(C)$.  In any of both cases, $\alpha(C') \leq \alpha(C)+1$. Since any connected diagram is obtained by adding edges to a path diagram $P$ with $v(G)$ vertices, and a path diagram has $k(P)=v(G)-1$ edges, we have to add $k(G)-k(P)=k(G)-v(G)+1$ edges. Since $\alpha(P)=v(P)=v(G)$, after to add the edges we get:
\begin{multline*}
\alpha(G) \leq v(G)+\underbrace{(1+ \cdots+ 1)}_{k(G)-v(G)+1 \mbox{ \footnotesize times}} =v(G)+ (k(G)-v(G)+1)=k(G)+1.
\end{multline*}

\item If $C$ is disjoint, it is conformed by $c(G)$ disjoint paths of legth 2. As we have viewed in point (i), the number of parameters $\alpha$ for a path with two vertices is two. Since we have $k(G)$ connected components $\alpha(C)=2k(G)$.

\item We follow a similar scheme to (i). It is easily seen that $\beta(C\oplus C')=\beta(C) + \beta(C')$. We will prove that for a connected graph $G$, $\beta(C)=2k(G)+v(G)$, and then the result comes from the additivity of $k(G)$ and $v(G)$ for disjoint graphs. 
Let $C$ be a configuration such that $\Diagram(C)$ is a path $P$. It can be proved by induction on the number of vertices that $\beta(C)=3v(G)-2$. By induction on the number of edges, suppose that $G'=\Diagram(C')$ is connected but not a path. Then, there is at least an edge $D$ such that if we remove it, the resulting graph $G=\Diagram(C)$ is still connected. Then, when we put the edge again, we have that $\beta(G')=\beta(G)+2$. Since any connected diagram is obtained by adding edges to a path diagram $P$ with $v(G)$ vertices, and a path diagram has $k(P)=v(G)-1$ edges, we have to add $k(G)-k(P)=k(G)-v(G)+1$ edges. Since $\beta(P)=3v(P)-2=3v(G)-2$, after to add the edges we get:
\begin{multline*}
\beta(G) = 3v(G)-2+\underbrace{(2+ \cdots+ 2)}_{k(G)-v(G)+1 \mbox{ \footnotesize times}} =3v(G)-2+ 2(k(G)-v(G)+1) \\=2k(G)+v(G).
\end{multline*}

\item Trivial. 
\item Since the degree of a vertex is at least one, there are no isolated vertices, whereby each connected component of the graf contains at least an edge. Hence, $c(G)\leq k(G)$.  We have equality for perfect matchings because each connected component has one edge. Suppose $c(G)= k(G)$ and let $G'$ a connected component of $G$ such that $1=c(G')<k(G')$. Necessarily, there is another component $G''$ such that $1=c(G'')>k(G'')=0$, meaning that $G''$ is an isolated vertex, which is absurd.
\end{enumerate} 
\end{proof}

\begin{theorem} \label{TheoremProb} The probability of a random groupoid has a deficient 2-element set is
$$1-\frac{1}{\sqrt{e^7}}=0.9698\ldots$$
\end{theorem}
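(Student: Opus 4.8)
The plan is to show that the number of deficient $2$-element sets of types $1$ to $7$ in a random groupoid of size $n$ converges in distribution, as $n\to\infty$, to a Poisson random variable of mean $\lambda=7/2$; the theorem then follows immediately, since a Poisson$(7/2)$ variable takes the value $0$ with probability $e^{-7/2}=1/\sqrt{e^7}$, so that $\Pr(X\geq 1)=1-e^{-7/2}$ (recall that type $0$ has already been discarded, so $X=\sum_{t=1}^{7}X_t$). To obtain the Poisson limit I would use the method of factorial (equivalently, binomial) moments. Writing $Y_n=\sum_{t=1}^{7}X_{t,n}$ for the total count in a random groupoid of size $n$, the $k$-th binomial moment $\Exp\!\left[\binom{Y_n}{k}\right]$ equals the expected number of $k$-configurations realised in the groupoid. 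Since the Poisson$(\lambda)$ law is determined by its moments and has $k$-th binomial moment $\lambda^k/k!$, it suffices to prove
$$ \Exp\!\left[\binom{Y_n}{k}\right] \xrightarrow[n\to\infty]{} \frac{(7/2)^k}{k!}\qquad\text{for every }k\geq 1. $$

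Next I would evaluate this expectation by summing over all $k$-configurations and grouping them by the equivalence relation $\sim$. For a single configuration $C$ with diagram $G=\Diagram(C)$, the probability that a random groupoid realises $C$ is, to leading order, $n^{\alpha(C)-\beta(C)}$: the partial groupoid $\bigcup_{D\in C}D^*$ occupies $\beta(C)$ cells of the Cayley table, whose entries are governed by $\alpha(C)$ free parameters, while the remaining $n^2-\beta(C)$ cells are unconstrained. By Lemma~\ref{Lemma3} we have $\beta(C)=2k+v(G)$, and the number of realisations of a fixed equivalence class inside $[n]$ is $\binom{n}{v(G)}\sim n^{v(G)}/v(G)!$. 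Multiplying, each class contributes
$$ \binom{n}{v(G)}\,n^{\alpha(C)-\beta(C)} \sim \frac{1}{v(G)!}\,n^{\alpha(C)-2k}. $$

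The heart of the argument is to identify which classes survive the limit. By Lemma~\ref{Lemma3}(i) and (v), $\alpha(C)\leq k(G)+c(G)\leq 2k$, and both inequalities are equalities exactly when $C$ is disjoint, i.e.\ when $\Diagram(C)$ is a perfect matching; in that case $\alpha(C)=2k$ and $v(G)=2k$ by Lemma~\ref{Lemma3}(ii). Hence every non-disjoint class has exponent $\alpha(C)-2k<0$ and contributes $0$ in the limit, whereas each disjoint class contributes $1/(2k)!$. Because the total number of equivalence classes is bounded by $7^k\,2^{2k^2-k}$ independently of $n$, the individually vanishing non-disjoint terms may be summed with impunity. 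Since there are $\lvert\mathcal{D}_{k,n}/{\sim}\rvert=7^k(2k-1)!!$ disjoint classes (for $n\geq 2k$), we obtain
$$ \lim_{n\to\infty}\Exp\!\left[\binom{Y_n}{k}\right] = \frac{7^k(2k-1)!!}{(2k)!} = \frac{7^k}{2^k\,k!} = \frac{(7/2)^k}{k!}, $$
using $(2k-1)!!=(2k)!/(2^k k!)$. This matches the Poisson$(7/2)$ moment, so $Y_n$ converges to Poisson$(7/2)$ and the theorem follows.

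The step I expect to be most delicate is justifying the leading-order estimate $\Pr(C\ \text{realised})=n^{\alpha(C)-\beta(C)}(1+o(1))$ with the correct constant: one must account for the constraints $x\neq y$ within each type and for the fact that parameter values belonging to distinct edges may freely coincide or differ, and check that all of this perturbs only lower-order terms. The other essential point is the uniform control of the non-disjoint contribution, so that its (termwise vanishing) sum genuinely tends to $0$; here the inequality $\alpha(C)\leq 2k$ of Lemma~\ref{Lemma3} does the decisive work, and it is precisely this inequality that explains why only perfect matchings govern the asymptotics and why the mean of the limiting Poisson law is $7/2$.
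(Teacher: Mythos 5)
Your proposal is correct, and its combinatorial core coincides exactly with the paper's: the same configuration/diagram formalism, the same split of equivalence classes into disjoint (perfect matchings, counted by $7^k(2k-1)!\,!$) and non-disjoint ones, the same use of Lemma~\ref{Lemma3} to show $\alpha(C)-2k<0$ kills the non-disjoint classes, and the same uniform bound $7^k 2^{2k^2-k}$ to sum the vanishing terms. Indeed, the quantity you compute, $\Exp\bigl[\binom{Y_n}{k}\bigr]$, is precisely the $k$-th term $\frac{1}{n^{n^2}}\sum\bigl|\bigcap_i \mathcal{G}^n_{D_i}\bigr|$ of the paper's inclusion--exclusion expansion. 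Where you genuinely diverge is the probabilistic wrapper: the paper applies the Bonferroni inequalities directly to $\Pr(X\geq 1)$, truncating the alternating series to justify the interchange of limit and sum, and never needs any distributional statement; you instead establish full convergence in distribution of $Y_n$ to Poisson$(7/2)$ via factorial moments and read off $\Pr(X=0)=e^{-7/2}$. Your route proves more --- it delivers the entire limit law, from which the paper's closing remarks ($\Pr(X_i\geq 1)=1-e^{-1/2}$ and the mutual independence of $X_1,\ldots,X_7$, via joint factorial moments) would follow in the same framework --- but it costs an extra citation: the step ``factorial moments converge to $\lambda^k/k!$, hence $Y_n\Rightarrow\mathrm{Poisson}(\lambda)$'' needs the standard method-of-moments theorem for Poisson approximation, whose usual proof is itself the Bonferroni argument the paper uses directly. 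So the two proofs are equivalent at bottom, with yours trading a self-contained elementary truncation for a stronger, off-the-shelf conclusion; your flagged delicate points (the $x\neq y$ constraints perturbing only the $O(n^{\alpha-1})$ term, and uniform control of the non-disjoint sum) are exactly the ones the paper handles, and your treatment of them is adequate.
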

\begin{proof} Denote by $\mathcal{G}^{n}_D$ the set of groupoids which have the 2-element deficient set $D \subseteq [n]$. By the inclusion-exclusion principle, the probability of a $n$-element groupoid having at least one 2-element deficient set is
$$\Pr(X\geq 1)=\lim_{n\to\infty} \frac{1}{n^{n^2}} \left| \bigcup_{ D\subseteq [n] } \mathcal{G}^{n}_D \right|=\lim_{n\to\infty} \frac{1}{n^{n^2}} \sum_{k=1}^{{n\choose 2}} (-1)^{k+1} \left| \bigcap_{D_1,\ldots, D_k\subseteq [n]} \mathcal{G}^{n}_{D_i} \right|.$$
We must introduce the limit twice in that expression to calculate that probability. First, we use the Bonferroni inequalities. Let $K$ an integer such that $1<2K \leq {n \choose 2}$. Then
\begin{align*}
 \sum_{k=1}^{2K-1} (-1)^{k+1} \left| \bigcap_{D_1,\ldots, D_k\subseteq [n]} \mathcal{G}^{n}_{D_i} \right| \leq  \left| \bigcup_{ D\subseteq [n] } \mathcal{G}^{n}_D \right| \leq \sum_{k=1}^{2K} (-1)^{k+1} \left| \bigcap_{D_1,\ldots, D_k\subseteq [n]} \mathcal{G}^{n}_{D_i} \right|.
 \end{align*}
We can introduce the limit inside the sums because they are finite. That means if the limits exist, then, multiplying by $1/n^{n^2}$,
$$\Pr(X\geq 1)= \sum_{k=1}^{\infty} (-1)^{k+1} \lim_{n\to\infty} \frac{1}{n^{n^2}} \left| \bigcap_{D_1,\ldots, D_k\subseteq [n]} \mathcal{G}^{n}_{D_i} \right|.$$
Let us calculate that limit.
\begin{align*}
&\lim_{n\to\infty} \frac{1}{n^{n^2}} \left| \bigcap_{D_1,\ldots, D_k\subseteq [n]} \mathcal{G}^{n}_{D_i} \right| \\
=& \lim_{n\to\infty} \frac{1}{n^{n^2}} \sum_{ C \in \displaystyle  \mathcal{C}_{k,n}} \left( n^{\alpha(G)}  + O(n^{\alpha(G)-1})\right)  n^{n^2 - \beta(G)} \\ =&\lim_{n\to\infty} \frac{1}{n^{n^2}} \sum_{ \widetilde{C} \in \displaystyle \faktor{\mathcal{C}_{k,n}}{\sim} } \left( n^{\alpha(G)}  + O(n^{\alpha(G)-1})\right) n^{n^2-\beta(G)} {n \choose \gamma(C)}.
\end{align*}
In the first equality, we have used the number of groupoids with $k$ 2-element deficient sets given by the possible entries in its Cayley table. All the entries are independent outside the configuration $C$. Therefore, there are $n^{n^2 - \beta(C)}$ possible choices. Next, we need to count the possible entries inside the configuration, considering that some entries are equal. That means counting the number of the parameters defining $C$, that is, $\alpha(C)=\left| \left(\bigcup C \right)\cdot \left(\bigcup C \right)\right|$. Therefore, there are $n^{\alpha(C)}+O(n^{\alpha(C)-1})$ possible choices for the parameters.  
In the second equality, we have used that the values $\alpha(C), \beta(C)$ are invariants if the configurations are equivalent. That is a trivial consequence of the Lemma~\ref{Lemma3} since $\alpha(C)$ and $\beta(C)$ depend at the end only on the graph. Since $\gamma(C)=\left| \bigcup C \right|$, there are ${ n \choose \gamma(C)}$ possible subsets where we can put the configuration.

Let $\overline{\mathcal{C}}_{k,n}=\mathcal{C}_{k,n}\setminus \mathcal{D}_{k,n} $, and take the following decomposition, where $\sqcup$ means a disjoint union:
$$\faktor{\mathcal{C}_{k,n}}{\sim}=\faktor{\left(\overline{\mathcal{C}}_{k,n} \sqcup \mathcal{D}_{k,n}\right)}{\sim}=\left(\faktor{\overline{\mathcal{C}}_{k,n}}{\sim}\right)  \sqcup \left( \faktor{\mathcal{D}_{k,n}}{\sim} \right).$$
We can decompose the above sum into two sums running over the sets $\faktor{\overline{\mathcal{C}}_{k,n}}{\sim}$ and $\faktor{\mathcal{D}_{k,n}}{\sim}$. 
Calculate the limit of the first sum. The number of non-equivalent configurations is bounded by $7^k 2^{2k^2-k}$ and therefore is finite, and we can swap the limit and the sum, and using the inequality  $\alpha(C) \leq k + c(G)$ from Lemma~\ref{Lemma3}(i), we get:
\begin{align*}&\lim_{n\to\infty} \frac{1}{n^{n^2}} \sum_{ \widetilde{C} \in \displaystyle \faktor{\overline{\mathcal{C}}_{k,n}}{\sim} } \left( n^{\alpha(G)}  + O(n^{\alpha(G)-1})\right) n^{n^2-\beta(G)} {n \choose \gamma(C)}\\
\leq&\lim_{n\to \infty} \sum_{ \substack{  G=\Diagram(C)  \\ \widetilde{C} \in \displaystyle \faktor{\overline{\mathcal{C}}_{k,n}}{\sim}}} \frac{n^{k+c(G)}}{ n^{ 2k+v(G)}} {n \choose v(G)} = \sum_{ \substack{  G=\Diagram(C)  \\ \widetilde{C} \in \displaystyle \faktor{\overline{\mathcal{C}}_{k,n}}{\sim}}} \lim_{n\to \infty} \frac{n^{k+c(G)}}{ n^{ 2k+v(G)}} {n \choose v(G)} \\
=& \sum_{ \substack{  G=\Diagram(C)  \\ \widetilde{C} \in \displaystyle \faktor{\overline{\mathcal{C}}_{k,n}}{\sim}}} \lim_{n\to \infty} n^{c(G)-k} =0.
\end{align*}
Where we have used that $c(G)<k$ when a configuration is not disjoint, Lemma~\ref{Lemma3}(v). 
For the second sum, we have that $c(G)=k$ and $v(G)=2k$, Lemma~\ref{Lemma3}  (ii) and (v). Therefore,
\begin{align*}&\lim_{n\to\infty} \frac{1}{n^{n^2}} \sum_{ \widetilde{C} \in \displaystyle \faktor{\mathcal{D}_{k,n}}{\sim} } \left( n^{\alpha(G)}  + O(n^{\alpha(G)-1})\right)  n^{n^2-\beta(G)} {n \choose \gamma(C)}\\
=&\lim_{n\to \infty} \sum_{ \substack{  G=\Diagram(C)  \\ \widetilde{C} \in \displaystyle \faktor{\mathcal{D}_{k,n}}{\sim}}} \frac{\left( n^{2k}  + O(n^{2k-1})\right) }{ n^{ 2k+v(G)}} {n \choose v(G)}  
\\=& \lim_{n\to \infty}  \left| \faktor{\mathcal{D}_{k,n}}{\sim}\right|  \frac{\left( n^{2k}  + O(n^{2k-1})\right)}{n^{ 4k}} {n \choose 2k} \\
 = &\: 7^k (2k-1)!\! ! \lim_{n\to \infty}\frac{n^{2k}}{n^{ 4k}} {n \choose 2k} 
= 7^k (2k-1)!\! !  \frac{1}{(2k)!}=\left(\frac{7}{2}\right)^k \frac{1}{k!}. 
\end{align*}
Now we can resume the main calculation,
$$\Pr ( X \geq 1 ) = \sum_{k=1}^{\infty} (-1)^{k+1}\left(\frac{7}{2}\right)^k \frac{1}{k!}=1-\sum_{k=0}^{\infty} \left(-\frac{7}{2}\right)^k \frac{1}{k!}=1-e^{-7/2}. \qedhere$$
\end{proof}

Remaking the last proof now it is easy to prove that $\Pr(X_i \geq 1)= 1-1/\sqrt{e}=0.3934\ldots$, for any $1\leq i \leq 7$, where we recall that $X_i$ counts the number of deficient sets of type $i$ in a random groupoid. In addition, all the variables $X_1, \ldots, X_7$ are mutually independent, which can be proved using the Cauchy product of series.

%%%%%%%%%%%%%%%%%%%%%%%%%%%%%%
\section{Some generalizations}

A first generalisation considers deficient sets for an operation $f$ of arity $d>2$, that is, a set $X$, such that $|f(X,\ldots, X)|\leq |X|$. It is easy to prove that given an integer $s\geq 3$, those random algebras do not admit a deficient s-element set. However, the probability that a $d$-ary operation has a 2-element deficient set is
$$1- \exp\left(-\frac{1}{2} { \,\,2^d \brace 2}\right)=1- \exp \left( \frac{1- 2^{2^d-1}}{2}\right).$$
We do not show the proof, but it is essentially similar to that of Theorem~\ref{TheoremProb}. We must consider that there are ${ \,\,2^d \brace 2}$ different types of partial algebras for a 2-element deficiency sets. Each type of deficient set is defined by partitioning the $2^d$ entries of the Cayley table into two sets. Recall that Stirling numbers count the number of partitions.

Another generalisation is possible by relaxing the notion of a deficiency. Given a groupoid, we say that a subset $X \subset A$ has \emph{exceedance} $\varepsilon$ if
$$|X\cdot X| = |X| + \varepsilon.$$
When $\varepsilon \leq 0$, $X$ is a deficcient set. Although the probability of a groupoid having deficient 3-element sets is null, by admitting some exceedency (less deficiency), it is not. First, show that the probability of a groupoid having a 3-element set $X$ with excedeency $\varepsilon \leq 2$ is null. Since $|X \cdot X|\leq 3+2=5$, to count types of sets, we have to consider partitions of the set of the nine entries of the Cayley table into five or fewer blocks, where each block represents a parameter. Thus, there are
$$\sum_{i=1}^5 { 9 \brace i}=18002$$
types of those sets. Then, the probability of a random groupoid with size $n$ having some one of these sets is bounded from above by
$$18002 \frac{1}{n^{9} }\big[ n \big]_5 { n \choose 3}  $$
which tends to 0 when $n$ goes to infinity. Now consider the probability that a random groupoid has a subset with exceedance three, for which we must generalise all the concepts of the previous pages. Six parameters define a matrix representing the partial groupoid whereby there are
$${ 9 \brace 6}=2646$$
types of 3-element sets with exceedance three. Configurations are defined similarly. To handle these configurations, we also consider diagrams, but instead of a base graph, we need 3-uniform hypergraphs \cite{Ouvrad}, where the hyperedges are given by the 3-element sets with exceedance three, and where we label each hyperedge with one of the 2646 types. Two configurations are equivalent if their diagrams are. Denote by $\mathcal{C}_{k,n}^3$ the set of configurations and $\mathcal{D}_{k,n}^3$ the number of disjoint configurations. It holds that
$$\left| \faktor{\mathcal{C}_{k,n}^3}{\sim} \right| \leq 2646^k 2^{3k \choose 3},$$
and we have the recurrence
$$ \left| \faktor{\mathcal{D}_{k,n}^3}{\sim} \right|= 2646 \frac{ (3k-1)(3k-2)}{2} \left| \faktor{\mathcal{D}_{k-1,n}^3}{\sim} \right|,$$
when $n\geq 3k$, which has the closed form
$$\left| \faktor{\mathcal{D}_{k,n}^3}{\sim} \right|= 2646^k \frac{1}{2^k} \frac{(3(k-1))!}{ (3(k-1))!\!!\!!}.$$
As in the proof of Theorem~\ref{TheoremProb} we use the inclusion-exclusion principle, and we separate the principal sum into two sums: one for equivalence classes of non-disjoint configurations and the other for disjoint. We can conjecture that Lemma~\ref{Lemma3} generalises in the form that there are analogous functions $\alpha, \beta, \gamma$ in terms of hypergraphs such that the first sum tends to zero as $n$ goes to infinity.
Thus, the probability of having a 3-element set with exceedance at most three is given by the alternated sum
\begin{align*}
&\sum_{k=1}^\infty (-1)^{k+1}\left(\frac{2646}{2}\right)^k\frac{(3(k-1))!}{ (3(k-1))!\!!\!!} \frac{1}{(3k)!} =\sum_{k=1}^\infty (-1)^{k+1} \left(\frac{2646}{6}\right)^k\frac{1}{ k!},\end{align*}
where we have used that $(3(k-1))!\!!\!!=3^{k-1} (k-1)!$. That yields the value
$$1-\frac{1}{e^{441}}.$$
Generalising calculations, we find that the probability of a groupoid having a $s$-element subset with exceedance $\varepsilon \geq s^2-2s$, with $s>1$, is not null.

\section{Declarations}

\begin{itemize}

\item[] \textbf{Ethical Approval}. Not applicable. 
\item[] \textbf{Competing interests}. No conflict of interest exists.
\item[] \textbf{Authors' contributions}. Not applicable. 
\item[] \textbf{Funding}. Not applicable. 
\item[] \textbf{Availability of data and materials}. Not applicable.

\end{itemize}

%%%%%%%%%%%%%%%%%%%%%%%%%%%%%
%\section*{Acknowledgment}

%% The Appendices part is started with the command \appendix;
%% appendix sections are then done as normal sections
%% \appendix

%% \section{}
%% \label{}

%% If you have bibdatabase file and want bibtex to generate the
%% bibitems, please use
%%
%  \bibliographystyle{elsarticle-harv} 
%  \bibliography{biblio}

%% else use the following coding to input the bibitems directly in the
%% TeX file.

%%%%%%%%%%%%%%%%%%%%%%%%%%%%%
%%%%%%%%%%%%%%%%%%%%%%%%%%%%%
\end{document}